\newtheorem{theorem}{Theorem} 
\theoremstyle{plain}
 \newtheorem{corollary}{Corollary}
 \newtheorem{proposition}{Proposition} 
 \newtheorem{remark}{Remark}
 \numberwithin{equation}{section}
\begin{document}

\title[The loop cohomology of a space with the polynomial] {The loop cohomology of a space with the polynomial cohomology algebra } \author{Samson
Saneblidze} \address{A. Razmadze Mathematical Institute\\ Department of Geometry and Topology\\ M. Aleksidze st., 1\\ 0193 Tbilisi, Georgia}
 \email{sane@rmi.ge}

\thanks{This research described in this publication was made possible in part by the grant \\ GNF/ST06/3-007 of the Georgian National Science
Foundation}

\subjclass[2000]{Primary 55P35; Secondary 55U99, 55S05}

\keywords{Loop space, polynomial cohomology, Hirsch algebra,  multiplicative resolution, Steenrod operation}

\date{}

\begin{abstract} Given a simply connected space $X$ with the cohomology  $H^*\!(X;{\mathbb Z}_2)$ to be polynomial,
 we  calculate the loop cohomology algebra $H^*(\Omega X;{\mathbb Z}_2)$  by means  of the action of the Steenrod cohomology operation $Sq_1$ on
 $H^*(X;{\mathbb Z}_2).$
As a consequence we obtain  that $H^*(\Omega X;{\mathbb Z}_2)$ is the exterior algebra
 if and only if
$Sq_1$ is multiplicatively decomposable on $H^{\ast}(X;{\mathbb Z}_2).$ The last statement in fact contains a converse of a theorem of A. Borel.

\end{abstract}

\maketitle

\section{Introduction}

Let  $X$ denote a simply connected topological space. The cohomology $H^*(X)$ is considered in  coefficients $\mathbb{Z}_2={\mathbb Z}/2{\mathbb Z}$
unless otherwise specified explicitly.
 In \cite{borel}, A. Borel gave a condition for  $H^*(X)$
  to be polynomial in terms of \emph{a simple system of generators} of the loop
space cohomology $H^*(\Omega X)$ that are transgressive (see also \cite{Mo-Ta},\,\cite{McCleary2}). This was one of the first nice applications of
spectral sequences that has been introduced in \cite{Serre}, and led in particular  to calculations of the cohomology of the Eilenberg-MacLane
spaces (see \cite{McCleary2}). However, for the converse direction, that is to determine $H^*(\Omega X)$ as an  algebra for a given $X$ with
$H^*(X)$ polynomial, a spectral sequence argument no longer works. On the other hand, it was known \cite{HMS} that there is an additive isomorphism
$H^*(\Omega X)\approx H^*(BH(X))$ where $BH(X)$ denotes the bar construction of $H(X).$ In the case   the shuffle product on $BH(X)$ is
\emph{geometric} we would get $H^*(\Omega X)$ to be exterior, but this is not true in general (cf. \cite{proute}).

In this paper we completely calculate the algebra $H^*(\Omega X)$ for $H^*(X)$ polynomial by means  of the Steenrod cohomology operation $Sq_1$ on
$H^*(X)$ (Theorem \ref{omegapoly}) and then establish the criterion  for $H^*(\Omega X)$ to be exterior (Corollary \ref{1}). Namely, given
$H(X)=H(C^*(X),d)$       recall that \[Sq_1:H^n(X)\rightarrow H^{2n-1}(X)\]
 is defined for $y\in H^n(X),\, y=[c],\,c\in C^n(X),dc=0,$ by
 $Sq_1(y)=[c\smile_1 c].$
Let now  $H^*(X)=\mathbb{Z}_{2}[y_1,\!...,y_k,\!...].  $ Suppose that  a set    $\mathcal{H}=\{y_k\}$ of polynomial generators of $H(X)$ is chosen
such that
 either   $Sq_1(y_i)\in H^{+}\cdot H^{+}$  or   $Sq_1(y_i)=y_k\!\! \mod H^{+}\cdot H^{+} $ where $y_i$ is uniquely determined
 by a given $y_k.$
  Define a subset $\mathcal{S}\subseteq \mathcal{H}$   as
  \[\mathcal{S}=\{ z_s\in \mathcal{H}\mid z_s\notin \operatorname{Im}Sq_1\!\!\mod H^{+}\cdot H^{+}\}.\]
Thus $\mathcal{S}= \mathcal{H}$ if and only if  $Sq_1(y_k)\in H^{+}\cdot H^{+}$ for all $k.$ Let $0\leq \nu_i<\infty$ be the smallest integer such
that $Sq_1^{(\nu_i+1)}(y_i)\in H^{+}\cdot H^{+},$ where $Sq_1^{(m)}$ denotes the $m$-fold composition $Sq_1\circ \cdots \circ Sq_1.$
 The integer $\nu_i$ is referred to as
the \emph{weak $\smile_1$-height} of $y_i;$ when the finite integer $\nu_i$ does not exist
  we say that $y_i$  has the infinite  weak  $\smile_1$-height  $\nu_i=\infty.$
  Let $\sigma : H^*(X)\rightarrow H^{*-1}(\Omega X)$ be the suspension homomorphism.
\begin{theorem}\label{omegapoly} Let
 $X$ be a simply connected space with $H^*(X)=\mathbb{Z}_{2}[y_1,\!...,y_k,\!...] $
 and $\nu_k$ to be the weak $\smile_1$-height of $y_k.$
Then the algebra $H^*(\Omega X)$ is multiplicatively generated    by the elements
 $\bar z_s=\sigma z_s$   satisfying the only  relations ${\bar z}_s^{2^{\nu_s+1}}=0$
 for $z_s\in \mathcal{S} .$
\end{theorem} \begin{corollary}\label{1} $H^*(\Omega X)=
 \Lambda(\bar y_1,\!...,\bar y_k,\!...)$ is the exterior algebra if and only if
 $y_k$ is of the zero  weak  $\smile_1$-height, i.e.,
  $Sq_1(y_k)\in H^{+}\cdot H^{+},$ for all $k.$
\end{corollary} \begin{corollary} $H^*(\Omega X)=
 {\mathbb Z}_{2}[\bar z_1,\!...,\bar z_s,\!...]$ is the polynomial algebra if and only if  $z_s$ is of the infinite  weak  $\smile_1$-height  for
 all $s.$
\end{corollary}

Our method of proving the theorem consists of using  the \emph{filtered Hirsch} model $(RH,d+h)\rightarrow C^*(X)$ of $X$ \cite{saneFiltered}. Note
that the underlying differential (bi)graded algebra $(RH,d)$ is a non-commutative version of  Tate-Jozefiak resolution of the commutative algebra
$H$ (\cite{Tate},\,\cite{Jozefiak}), while $h$ is a perturbation of $d$ \cite{sane} similar to \cite{hal-sta}. Furthermore,  the tensor algebra
$RH=T(V)$ is endowed with higher order operations $E=\{E_{p,q}\}$ that extend $\smile_1$-product measuring the non-commutativity of the product on
$RH;$ and there also is a binary operation $\cup_2$ on $RH$ measuring the non-commutativity of the $\smile_1$-product. In general, by means of
$(RH,d+h)$ one can recognize the cohomology $H(BC^*(X))$ of the bar construction $BC^*(X)$ as an algebra.
 The case of polynomial
$H$  is distinguished  since $H$ has no multiplicative relations unless that of the commutativity;  furthermore,  we  can equivalently  take a
small
 multiplicative resolution $R_{\tau}H$ in which the Hirsch algebra structure is given by commutative (on $V_{\tau}$) and associative
 $\smile_1$-product.
This  allows an explicit calculation of the algebra $H(BC^*(X)),$ and, consequently, of the loop space cohomology $H^*(\Omega X)$ in question.

Obviously the hypothesis of Corollary \ref{1} is satisfied for an evenly graded polynomial algebra $H^*(X).$ Note that our method  can be in fact
applied to an evenly  graded polynomial algebra  $H^*(X;\Bbbk)$ for any coefficient ring $\Bbbk$ to establish that $H^*(\Omega X;\Bbbk)$ is
exterior. Though, this fact
   can be also  deduced
 from the Eilenberg-Moore spectral sequence (see,
 for example, \cite{McCleary2};
 for further  references  of spaces with polynomial cohomology rings see  also \cite{notbohm}, \cite{Ande-Grod}).

\vspace{0.2in} \section{Hirsch resolutions of  polynomial algebras} We adopt the notations  and terminology of \cite{saneFiltered}. Recall that
given  a Hirsch algebra $(A,\{E_{p,q}\})$ with $H=H^*(A),$ there is
  a filtered Hirsch model
\begin{equation*}
 f: (RH,d_h)\rightarrow A.
\end{equation*}
 In general
 $\rho: (RH,d)\rightarrow H$ is a  multiplicative resolution of the graded commutative algebra $H$
  with
 $R^*H^*=T(V^{*,*})$ (bigraded tensor algebra) and
 \begin{equation*}
V^{*,*}= \mathcal{E}^{*,*} \oplus \mathcal{T}^{*,*}\oplus {\mathcal M}^{*,*}. \end{equation*} Whence  multiplicative generators of $H$ is chosen
the module
  ${\mathcal M}^{0,*}=V^{0,*}$
 is uniquely determined;  furthermore,  ${\mathcal M}^{<0,*}$
  corresponds to (multiplicative) relations in  $H$ which is not a consequence of that of the commutativity, while
 ${\mathcal E}$ just corresponds to the commutativity relation in $H.$
 (The module ${\mathcal T}$ is determined  by $\cup_2$-product that measures the non-commutativity of $\smile_1$-product.)

  However, when $H={\mathbb Z}_2[y_1,...,y_k,...]$ is polynomial,
  the module $\mathcal M$ is much simplified since $H$ has no relations unless that of  the commutativity. Namely, we can  set  ${\mathcal
  M}^{<0,*}=0.$ In particular,
 denoting a basis element of $V^{0,*}$ by  $x_k,$ i.e.,
       ${\mathcal V}^{0,*}=\{ x_k\}$ with $\rho x_k=y_k,$
    we have that a basis of the kernel of the epimorphism $\rho|_{R^0H}:R^0H\rightarrow H$ is formed  by  $x_ix_j+x_jx_i,\,i\neq j,$
   and
  \begin{multline*}
  V^{-1,*}=\mathcal{E}^{-1,*}=\langle x_i\smile_1x_j \mid x_k\in {\mathcal V}^{0,*}  \rangle
  \ \
   \text{with} \\
    d(x_i\smile_1x_j)=d(x_j\smile_1x_i)=x_ix_j+x_jx_i.
  \end{multline*}
  \begin{multline*}(\mathcal{T}^{-2,*}=\langle x_i\cup_2x_j\,(= x_j\cup_2x_i) \mid x_k\in {\mathcal V}^{0,*}  \rangle \ \ \text{with}\\
      d(x_i\cup_2x_j)=x_i\smile_1x_j+x_j\smile_1x_i,i\neq j,\ \ \text{and}\ \  d(x_i\cup_2x_i)=x_i\smile_1 x_i. )
      \end{multline*}
 Moreover, we can go further
and reduce $V$  at the cost of $\mathcal{E}$ (and, consequently, of ${\mathcal T}$ too) to obtain a small multiplicative resolution $R_{\tau}H.$
 Namely,
 set
  \[R_{\tau}H=RH/ J_{\tau} \]
 where $J_{\tau}\subset RH$ is a Hirsch ideal generated by
   \begin{multline*}\{ E_{p,q}(a_1,...,a_p;a_{p+1},...,a_{p+q}),\,dE_{1,2}(a_1; a_2, a_3)
   ,\,dE_{2,1}(a_1,a_2; a_3)
   ,\, a\cup_2 b,\,d(a\cup_2b) \\
   |\,  (p,q)\neq(1,1),\,
   a,b\in {\mathcal V},\, a\neq b  \}
   \end{multline*}
   where   $a_i\in RH$ unless $i=p+q$ for  $p\geq 2$ and $q=1$ in which case $a_{p+1}\in {\mathcal V}.$
   Since $d: J_{\tau}\rightarrow  J_{\tau},$ we get a Hirsch algebra map $g_{\tau}:(RH,d){\rightarrow} (R_{\tau}H,d)$  so  that
    a resolution map $\rho: RH \rightarrow H$   factors  as
   \[\rho: (RH,d)\overset{g_{\tau}}{\longrightarrow} (R_{\tau}H,d) \overset{\rho_{\tau}}{\longrightarrow} H.\]
    By definition we have $h:{\mathcal E}\rightarrow {\mathcal E};$ furthermore, since the transgressive component $h^{tr}$ of $h$ annihilates
    $a\cup_2b$ for $a\neq b,\,a,b\in {\mathcal V}$
     (cf. \cite[Proposition 4]{saneFiltered}),
     we  get
 $h: J_{\tau}\rightarrow  J_{\tau},$ too. Thus
   $g_{\tau}$  extends to  a quasi-isomorphism  of Hirsch algebras
 \begin{equation*}
 g_{\tau}:(RH,d_h)\rightarrow (R_{\tau}H,d_h).
 \end{equation*}

Note that  the Hirsch algebra $(R_{\tau}H,d_h)$ can be described immediately  as follows. Indeed, we have $R_{\tau}H=T(V^{*,*}_{\tau})$ with
$V^{*,*}_{\tau}=\langle {\mathcal V}^{*,*}_{\tau} \rangle,$
 \begin{multline*}
 {\mathcal V}_{\tau}=\{x_i,\, {x_j}^{\cup_2 2^{m}},\,      a_{i_1}\!\smallsmile_1\cdots \smallsmile_1 \! a_{i_n}
 \mid a_{i_r}\in\{x_i, {x_{j}}^{\cup_2 2^{m}}\}_{m\geq 1},\,  x_k\in {\mathcal V}^{0,*},n\geq 2\}.
  \end{multline*}
The differential $d$ on $R_{\tau}H$ is determined by \begin{multline*}dx_k=0,\,
 d(a\!\smile_1b)=da\!\smile_1\!b +a\!\smile_1\! db+ab+ba \ \  \text{and}\\
 d(x_k\cup_2 x_k)=x_k\smile_1x_k,\,\,\,  d(x_k^{\cup_2{2^{m}}})=x_k^{\cup_2{2^{m-1}}}\!\!\smile_1 \!x_k^{\cup_2{2^{m-1}}},\,m\geq 2,
  \end{multline*}
   while its perturbation $h$ by
  \[
  hx_k=0,\,h(a\smile_1b)=ha\smile_1b +a\smile_1 hb\ \
     \text{and} \ \
     h(x_k^{\cup_2{2^{m}}})= h^{tr}(x_k^{\cup_2{2^{m}}}),
           \]
      where
      $ h^{tr}(x_k\cup_2 x_k)$ is defined by
       $\rho_{\tau} h^{tr}(x_k\cup_2 x_k)=Sq_1(y_k).$
         The Hirsch  algebra structure   of  $(R_{\tau}H,d_h)$ is generated by  commutative (on $V_{\tau}$) and  associative $\smile_1$-product
         satisfying the  (left) Hirsch formula
\begin{equation*} c\smile _{1}ab=(c\smile _{1}a)b+a(c\smile _{1}b)
  \end{equation*}
  and the (right) generalized Hirsch formula for $c\in V_{\tau}$
  \begin{equation*}
ab\smile_1 c=\left\{\!\!\begin{array}{llll} a(b\smile_1 c)+ (a\smile_1 c)b, &  c\in \{x_i, {x_{j}}^{\cup_2 2^{m}}\}_{m\geq 1},\vspace{5mm}\\
 a(b\smile_1 c)+(a\smile_1 c)b\\+\,(a\smile_1 c_1)(b\smile_1 c_2)+(a\smile_1 c_2)(b\smile_1 c_1) , &   c=c_1\smile_1 c_2.
\end{array} \right. \end{equation*}

To ensure that $\rho_{\tau}:(R_{\tau}H,d)\rightarrow H$ is a multiplicative resolution of $H$ it suffices to verify the following

\begin{proposition}\label{acyclic} The  chain complex $(R_{\tau}^*H^*,d)$ is acyclic in the negative resolution degrees, i.e.,
$H^{i,*}(R_{\tau}^iH^*,d)=0,i<0.$ \end{proposition}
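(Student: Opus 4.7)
The plan is to deduce acyclicity of $(R^*H^*,d)$ in negative resolution degrees by a filtration argument that compares $(RH,d)$ with the classical Koszul resolution of $H=S(U)$. First I would introduce a filtration $F_\bullet RH$ of the tensor algebra $T(V)$ organized by the total number of operation symbols (elements of $\mathcal{E}\cup\mathcal{T}$) appearing in a given word. Inspection of the differential formulas (\ref{dif}) and (\ref{cup2q}) shows that the terms of $d$ separate according to this invariant: the ``internal'' terms $E_{p,q}(\ldots,da_i,\ldots)$ preserve it, the ``contraction'' terms $E_{p-1,q}(\ldots,a_ia_{i+1},\ldots)$ and $E_{p,q-1}(\ldots,b_jb_{j+1},\ldots)$ reduce it, and the ``splitting'' terms $E_{i,j}\cdot E_{p-i,q-j}$ turn one operation symbol into two separated by a $\cdot$-product. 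After a suitable adjustment of the filtration direction one obtains a convergent spectral sequence.

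On its initial page the differential reduces to the internal part acting by the derivation rule on each entry, so the page decomposes as a tensor product over operation symbols. Using the explicit description of $V^{-1,*}$ and $V^{-2,*}$ given after (\ref{3ass}) as a template, I would then identify the next page with the classical Koszul resolution $S(U)\otimes\Lambda(s^{-1}U)$ of $H=S(U)$: the associativity relations (\ref{assborel}) cut down the $\mathcal{E}$-generators to non-redundant ones, while the unshuffle formula (\ref{cup2q}) and the symmetry $a_i\cup_2 a_j = a_j\cup_2 a_i$ make the $\mathcal{T}$-generators contribute exactly the exterior directions $\Lambda^r(s^{-1}U)$. Since the Koszul complex $S(U)\otimes\Lambda(s^{-1}U)$ is acyclic in negative resolution degrees, the acyclicity passes back to $(RH,d)$ by convergence.

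The main obstacle is precisely this identification with the Koszul complex. One must verify by induction on $|n|$ that the relations on $V^{-n,*}$ imposed by (\ref{assborel}) and by the $\cup_2$-symmetry are exactly the Koszul syzygies---neither more nor less---so that ranks match on both sides of the comparison. An alternative, more combinatorial route would construct an explicit contracting homotopy $s\colon R^{<0}H\to R^{<0}H$ extending the obvious assignment $s(ab-ba)=a\smile_1 b$ through the higher operations $E_{p,q}$ and $\cup_2$; the inductive description of $V^{-2,*}$ following (\ref{3ass}) indicates how such a recursion should proceed, but the bookkeeping grows delicate quickly, which is why the spectral sequence comparison with the Koszul model seems the cleanest route to the conclusion.
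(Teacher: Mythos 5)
Your proposal stops short of a proof at exactly the point you yourself flag as ``the main obstacle'': the identification of a page of your spectral sequence with the Koszul complex is asserted, not verified, and that identification carries the entire content of the proposition. Moreover, the target you propose appears to be the wrong one. The resolution $RH=T(V)$ is a \emph{non-commutative} resolution of the commutative algebra $S(U)$: already $R^0H=T(U)$ is the free associative algebra, the degree $-1$ generators $E_{1,1}(a;b)$ kill the commutators $ab-ba$, and the higher $E_{p,q}$ and the $\cup_2$-generators kill the syzygies among those. Nothing in sight is the Koszul complex $S(U)\otimes\Lambda(s^{-1}U)$, which resolves $\Bbbk$ as an $S(U)$-module rather than $S(U)$ as an associative algebra. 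In the same vein, the $\mathcal{T}$-generators cannot ``contribute exactly the exterior directions'': the relation is $a_i\cup_2a_j=a_j\cup_2a_i$ with $a\cup_2a\neq 0$, and the tower $a\cup_2a,\ a\cup_2a\cup_2a,\dots$ sits in resolution degrees $-2,-4,\dots$, i.e.\ it is a symmetric (divided-power--like) direction, which is precisely what is needed to resolve the syzygy $d(a\cup_2a)=a\smile_1a$ coming from (\ref{cup2q}). Finally, the filtration by the number of operation symbols is not obviously compatible with $d$ (the last sum in (\ref{dif}) raises the count while other terms preserve it), and ``a suitable adjustment of the filtration direction'' leaves unexamined the real work: the resulting $E_0$- or $E_1$-term still retains the contraction terms and the relations (\ref{assborel}), so computing its homology is essentially the original problem.

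The paper takes the route you set aside as too delicate: it constructs an explicit degree $-1$ map $s:RH\rightarrow RH$, defined on monomials $x_1\cdots x_n$ by a case analysis governed by a linear order on the polynomial generators (inserting a $\smile_1$ at the first ascent, replacing an out-of-order $\smile_1$-factor by a $\cup_2$-factor, and expanding an $E_{p,q}$ whose first decomposable entry splits), and shows that $sd+ds-Id$ is locally nilpotent on $\operatorname{Ker}\rho$, which suffices for acyclicity in negative resolution degrees. The structural point that keeps the bookkeeping manageable is that for every generator $v$ with $s(v)=0$ there is a \emph{unique} summand $x$ of $dv$ with $s(x)=v$. If you wish to salvage a filtration argument, the associated graded object to aim at is this ordered-word complex, not the Koszul complex; as written, your proposal has a genuine gap at its central step.
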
 \begin{proof} First observe that as a cochain complex
 $\operatorname{Ker}\rho_{\tau}$ can be decomposed  via $\operatorname{Ker} \rho_{\tau}=A\oplus B$ in which
 $A=\oplus A(n),\,n\geq 2,$ $A(n)$ is determined by all monomials  consisting of the $\cdot $ and
$\smile_1$ products evaluated on all generators $x_{i_1},...,x_{i_n}\in V^{0,*}_{\tau}$  with distinct $x_i$'s and $B$ is determined by the other
monomials. For example, the cochain complex $A$ is acyclic since $A(n)$ can be identified with the cellular chains of the permutohedron $P_n$ (cf.
\cite{saneClass}); thus a chain contracting homotopy $s_A:A\rightarrow A$ is fixed. To see that $B$ is also acyclic, define a map $s_B:B\rightarrow
B$ of degree $-1$ as follows. On $B^{0,*}:$ For $xy\in B^{0,*}$ with  $x\in A^{0,*}$ or $y\in A^{0,*},$  set $s_B(xy)=s_A(x)y$ or $s_B(xy)=xs_A(y).$
On $B^{<0,*}:$
 For a monomial $u=u_1\cdots u_m\in B^{<0,*},$
let $i$ be the first integer such that either
  $u_i=
x_{i_1}\!\smallsmile_1x_{i_2}\!\smallsmile_1\cdots \smallsmile_1 \! x_{i_n},\,n\geq 2,$ or $u_i= x^{\cup_2 2^{k}}_{i_1}\!\smallsmile_1 x^{\cup_2
2^{k}}_{i_2}\!\smallsmile_1 y,$  $k\geq 1,\,y\in V$ where $i_1=i_2.$ Set $s(u)=u_1\cdots \tilde u_i\cdots u_n$ with $\tilde u_i=x_{i_1}\!\cup_2
x_{i_1}\!\smallsmile_1x_{i_3}\!\smallsmile_1\cdots \smallsmile_1 \! x_{i_n}$ or $\tilde u_i=x^{\cup_2 2^{k+1}}_{i_1}\!\!\smallsmile_1 y$
respectively.
  While define $s_B$ to be zero on the other monomials of $B^{<0,*}.$
Then  for each element $a\in B $ there is an integer $n(a)\geq 1$ such that $n(a)^{th}$-iteration of the operator $s_Bd+ds_B-Id:B\rightarrow B$
evaluated on $a$ is zero, i.e.,
  $  (s_Bd+ds_B-Id)^{(n(a))}(a)=0$ as desired.
\end{proof}

\section{Proof of Theorem 1}

Recall   (\cite{baues3},\,\cite{KScubi}) that  given a space  $X,$ there are operations $E=\{E_{p,q}\}$  on the cochain complex $C^*(X)$ making it
into a Hirsch algebra. Note that in the simplicial case one can choose $E_{p,q}=0$ for $q\geq 2.$ Furthermore,  given a Hirsch algebra $A,$ its
structural operations $E=\{E_{p,q}\}$  induce a product $\mu_{E}$ on the bar construction $BA.$ In particular,
  there is an algebra
isomorphism \begin{equation*} H^*(\Omega X) \approx H(BC^{\ast}(X),d_{_{BC}},\mu_{_{E}}). \end{equation*}
 (In the above we assume $C^*(X)=C^*(\operatorname{Sing}^1X)/C^{>0}(\operatorname{Sing}\, x),$ in which
 ${\operatorname{Sing}}^{1}X\subset
{\operatorname{Sing}}X$ is the Eilenberg 1-subcomplex generated by the singular simplices that send the 1-skeleton of the standard $n$-simplex
$\Delta ^{n}$ to the base point $x$ of $X.$) \begin{proposition}\label{comparison} A morphism $g: A\rightarrow A'$ of Hirsch algebras induces a
Hopf dga map of the bar constructions \[Bg : B A\rightarrow B A'\] and  if  $g$ is a homology isomorphism, so is $Bg.$ \end{proposition}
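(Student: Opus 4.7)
The plan is to split the proof into three verifications: that $Bf$ respects the coalgebra structure, that it intertwines the Hirsch products $\mu_E$ and $\mu_{E'}$, and---under the additional hypotheses---that it is a quasi-isomorphism. Because $Bf$ is induced from $f$ by applying $f^{\otimes n}$ on words of length $n$, it is automatically a chain map (as $f$ is a chain map of associative dgas) and, with respect to the deconcatenation coproduct, a dg coalgebra map; both of these properties are classical bar-construction theory and use no Hirsch structure.

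For the product compatibility, condition (iii) of Definition 1 identifies $\mu_E$ as the unique coalgebra-compatible coderivation extension, into the cofree coalgebra $BA=T^c(sA^+)$, of the map $E=\bigoplus_{p,q}E_{p,q}\colon BA\otimes BA\to A$. Since $f$ is a morphism of Hirsch algebras one has $f\circ E_{p,q}=E'_{p,q}\circ(f^{\otimes p}\otimes f^{\otimes q})$ for every $p,q$, hence $f\circ E=E'\circ(Bf\otimes Bf)$; uniqueness of the cofree lift then forces $Bf\circ\mu_E=\mu_{E'}\circ(Bf\otimes Bf)$. Combined with the coalgebra property this shows that $Bf$ is a Hopf dga map.

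For the quasi-isomorphism part, filter both bar constructions by word length, $F_pBA=\bigoplus_{k\leq p}(sA^+)^{\otimes k}$. The differential on $BA$ preserves this filtration---its length-decreasing piece coming from the product on $A$ drops the filtration by one, while the internal piece preserves it---and the 1-reducedness hypothesis forces the filtration to be bounded in every fixed total degree, so the associated spectral sequence converges strongly to $H^*(BA)$. The $E^0$-page is $(T(sA^+),\,sd_As^{-1})$, and the $\Bbbk$-freeness of $A$ lets the K\"unneth theorem identify the $E^1$-page with $T(sH^*(A)^+)$. Since $f_*$ is an isomorphism by hypothesis, the induced map of spectral sequences is an isomorphism on $E^1$; the comparison theorem then yields the desired isomorphism on $H^*$.

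I expect the main technical hurdle to be the product compatibility in the middle step, which depends crucially on the explicit identification of $\mu_E$ as the cofree extension of $E$ and not merely on the general coderivation relation; the other two steps reduce to standard bar-construction machinery, with the two hypotheses (1-reducedness and $\Bbbk$-freeness) being precisely what is needed for strong convergence and for the desired shape of the $E^1$-page.
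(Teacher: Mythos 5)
Your proposal is correct and takes essentially the same route as the paper, whose entire proof is the single sentence ``standard by using a spectral sequence comparison argument'': you are simply spelling out that argument (word-length filtration, comparison theorem) together with the routine verification, left implicit in the paper, that the Hirsch-morphism condition plus uniqueness of the cofree/coderivation extension makes $Bf$ multiplicative for $\mu_E$. One small caveat: over a general $\Bbbk$ the $E^1$-page is not literally $T(sH^*(A)^+)$, since the K\"unneth sequence for $A^{\otimes k}$ carries a $\operatorname{Tor}_1^{\Bbbk}$ term ($H^*(A)$ need not be free even when $A$ is), but naturality of that sequence and the five lemma --- or the observation that a quasi-isomorphism of bounded-below complexes of free modules is a chain homotopy equivalence, so $f^{\otimes k}$ is again a quasi-isomorphism --- still give that the induced map on $E^1$ is an isomorphism, which is all the comparison theorem requires.
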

\begin{proof} The proof is standard by using a spectral sequence comparison argument. \end{proof} Denote $\bar{V_{\tau}}=s^{-1}(V_{\tau}^{>0})\oplus
{\mathbb Z}_2 $ and define the differential
 $\bar {d}_h$ on $\bar{V}_{\tau}$     by the restriction of $d+h$ to $V_{\tau}$
to obtain the cochain complex $(\bar{V}_{\tau},\bar{d}_h).$ Let  $\psi: B(R_{\tau}H)\rightarrow \bar V_{\tau}$ be the standard projection of cochain
complexes. We can convert $\psi$ as a map of dga's by introducing  a product on   $\bar V_{\tau}.$ Namely,  for $\bar a, \bar b\in \bar V_{\tau},$
set \[\bar a\bar b=\overline {a\smile_1b} \ \ \  \text{with}\ \ \     \bar a1=1\bar a=\bar a. \]
 Then we get  the following sequence of algebra isomorphisms
\begin{multline*} H(BC^{\ast}(X),d_{_{BC}},\mu_{_{E}})\overset{Bf^{\ast}}{\underset{\approx}{\longleftarrow}} H(B(RH),d_{_{B(RH)}},\mu_{_{E}})
 \overset{Bg_{\tau}^{\ast}}{\underset{\approx}{\longrightarrow}}
H(B(R_{\tau}H),d_{_{B({R}_{\tau}H)}},{\mu}_{_{{E}_{\tau}}})\\ \overset{\psi^*}{\underset{\approx}{\longrightarrow}} H(\bar V_{\tau}, \bar d_h),
\end{multline*} where the first two isomorphisms are by Proposition \ref{comparison}, while the third isomorphism (additively) is a consequence of a
general fact about tensor algebras
 \cite{F-H-T} (see also \cite{HMS}).
 Thus  the calculation of the algebra
$H(\Omega X)$ reduces to that of $H(\bar V_{\tau},\bar d_{h}).$ By definition of $h$ it is easy to see that any $\bar d_h$-cocycle in $\bar
V^{*,*}_{\tau}$ is cohomologous to a $\bar d_h$-cocycle in $\bar V_{\tau}^{0,*}.$ In particular
 $\bar x_k^{2^m}(=s^{-1}\left(x_k^{\smallsmile_1 2^{m}}\right))$ is cohomologous to $s^{-1}\left(a_k|_{V_{\tau}^{0,*}}\right)$
 for $a_k\in R^0H$
with $\rho_{\tau}a_k=Sq_1^{(m)}(y_k),$  so  the cohomology algebra  $H(\bar V_{\tau},\bar d_{h})$ is as desired.

\begin{remark} Refer  to Example 3 from \cite{saneFiltered} and recall that there is a canonical Hirsch algebra structure $Sq=\{Sq_{p,q}\}$ on
$H(X)$ determined by $Sq_1.$     The isomorphism $H^*(\Omega X)\approx H^*(BH(X))$ from the introduction converts into  an algebra one  when
$BH(X)$  is endowed with the product $\mu_{_{Sq}}.$ Details are left to the interested reader. \end{remark} \vspace{0.1in}

\end{document}